\theoremstyle{plain}
\newtheorem*{thm A}{Theorem~A}
\newtheorem*{thm B}{Theorem~B}
\newtheorem*{thm C}{Theorem~C}
\newtheorem*{thm D}{Theorem~D}
\newtheorem*{thm E}{Theorem~E}
\newtheorem*{Main Theorem}{Main Theorem}
\newtheorem*{proof**}{Proof of Theorem~\ref{Theorem 2.7}}
\newtheorem*{thm 1}{Theorem~1}
\newtheorem*{thm 2}{Theorem~2}
\newtheorem*{pro A}{Proposition~A}
\newtheorem*{pro B}{Proposition~B}
\newtheorem*{lem A}{Lemma~A}
\newtheorem*{lem B}{Lemma~B}
\newtheorem*{lem C}{Lemma~C}
\newtheorem*{lem D}{Lemma~D}
\newtheorem*{proof*}{\it The proof of Corollary}
\newtheorem*{exmp 1}{Example~1}
\newtheorem*{exmp 2}{Example~2}
\newtheorem{theorem}{Theorem}[section]
\newtheorem{corollary}{Corollary}[section]
\newtheorem{lemma}{Lemma}[section]
\newtheorem{proposition}[theorem]{Proposition}
\theoremstyle{plain}
\newcommand{\be}{\begin{equation}}
\newcommand{\ee}{\end{equation}}
\newcommand{\bea}{\begin{eqnarray}}
\newcommand{\eea}{\end{eqnarray}}
\newcommand{\ba}{\begin{array}}
\newcommand{\ea}{\end{array}}
\newcommand{\bc}{\begin{center}}
\newcommand{\ec}{\end{center}}
\newcommand{\benu}{\begin{enumerate}}
\newcommand{\eenu}{\end{enumerate}}
\newcommand{\bpr}{\begin{proposition}}
\newcommand{\epr}{\end{proposition}}
\newcommand{\ble}{\begin{lemma}}
\newcommand{\ele}{\end{lemma}}
\newcommand{\bco}{\begin{corollary}}
\newcommand{\eco}{\end{corollary}}
\def \D{\mathfrak D}
\begin{document}

\title[Parallel Ricci tensor in generalized Tanaka-Webster connection]
{Generalized Tanaka-Webster Parallel Ricci tensor in complex two-plane Grassmannians}
\vspace{0.2in}
\author[Juan de Dios P\'{e}rez and Young Jin Suh]{Juan de Dios P\'{e}rez and Young Jin Suh}
\address{\newline
Juan de Dios P\'{e}rez
\newline Departamento de Geometria y Topologia,
\newline Universidad de Granada,
\newline 18071-Granada, Spain}
\email{jdperez@ugr.es}

\address{\newline
Young Jin Suh
\newline Kyungpook National University,
\newline Department of Mathematics,
\newline Taegu 702-701, Korea}
\email{yjsuh@knu.ac.kr}

\footnotetext[1]{{\it 2000 Mathematics Subject Classification}\ : Primary 53C40, Secondary 53C15.}
\footnotetext[2]{{\it Key words and phrases}\ : Real hypersurfaces; complex two-plane Grassmannians; Hopf hypersurface; generalized Tanaka-Webster connection; Ricci tensor.}

\thanks{* First author is partially supported by MCT-FEDER Grant MTM2010-18099 and second author by Proj. No. NRF-2012-R1A2A2A-01043023 from National Research Foundation}

\begin{abstract}
We prove the non-existence of Hopf real hypersurfaces in complex two-plane Grassmannians
whose Ricci tensor is parallel with respect to the generalized Tanaka-Webster
connection.
\end{abstract}

\maketitle

\section{Introduction}
\setcounter{equation}{0}
\renewcommand{\theequation}{1.\arabic{equation}}
\vspace{0.13in}

The generalized Tanaka-Webster connection (from now on, g-Tanaka Webster connnection) for contact metric manifolds was introduced by Tanno (\cite{TAN}) as a generalization of the connection defined by Tanaka in \cite{TA} and, independently, by Webster in \cite{W}. The Tanaka-Webster connection is defined as a canonical affine connection on a non-degenerate, pseudo-Hermitian CR-manifold. A real hypersurface $M$ in a K\"{a}hler manifold has an (integrable) CR-structure associated with the almost contact structure $(\phi ,\xi ,\eta ,g)$ induced on $M$ by the K\"{a}hler structure, but, in general, this CR-structure is not guaranteed to be pseudo-Hermitian. Kon considered
a g-Tanaka-Webster connection for a real hypersurface of a K\"{a}hler manifold  (see \cite{Kon}) by

\begin{equation} \label{1.1}
\hat{\nabla}_X^{(k)}Y=\nabla_XY+g(\phi AX,Y)\xi -\eta(Y)\phi AX-k\eta(X)\phi Y
\end{equation}

\noindent for any $X,Y$ tangent to $M$, where $\nabla$ denotes the Levi-Civita connection on $M$, $A$ is the shape operator on $M$ and $k$ is a non-zero real number. In particular, if the real hypersurface satisfies $A\phi +\phi A=2k\phi$, then the g-Tanaka-Webster connection $\hat {\nabla}^{(k)}$ coincides with the Tanaka-Webster connection (see \cite{TAN} and \cite{W}).
\par
\vskip 6pt
Let us denote by $G_2(\mathbb{C}^{m+2})$ the set of all complex 2-dimensional linear subspaces in $\mathbb{C}^{m+2}$. It is known to be the unique compact irreducible Riemannian symmetric space equipped with both a K\"{a}hler structure $J$ and a quaternionic K\"{a}hler structure $\mathfrak{J}$ not containing $J$ (see Berndt and Suh \cite{BS1}). In other words, $G_2(\mathbb{C}^{m+2})$ is the unique compact, irreducible K\"{a}hler, quaternionic K\"{a}hler manifold which is not a hyper-K\"{a}hler manifold.
\par
\vskip 6pt
Let $M$ be a real hypersurface in $G_2(\mathbb{C}^{m+2})$ and $N$ a local normal unit vector field on $M$. Let also $A$ be the shape operator of $M$ associated to $N$. Then we define the structure vector field of $M$ by $\xi =-JN$. Moreover, if $\{ J_1,J_2,J_3\}$ is a local basis of ${\mathfrak J}$, we define $\xi_i=-J_iN$, $i=1,2,3$. We will call $D^{\perp} =  Span \{\xi_1,\xi_2,\xi_3\}$.
\par
\vskip 6pt
$M$ is called Hopf if $\xi$ is principal, that is, $A\xi =\alpha\xi$. Berndt and Suh, \cite{BS1} proved that if $m \geq 3$, a real hypersurface $M$ of $G_2(C^{m+2})$ for which both $[\xi]$ and ${\D}^{\perp}$ are $A$-invariant must be an open part of either (A) a tube around a totally geodesic $G_2(C^{m+1})$ in $G_2(C^{m+2})$, or (B) a tube around a totally geodesic $HP^n$ in $G_2(C^{m+2})$. In this second case $m=2n$.
\par
\vskip 6pt
Let $S$ denote the Ricci tensor of the real hypersurface $M$. In \cite{PS} we proved the non-existence of Hopf real hypersurfaces in $G_2(C^{m+2})$, $m \geq 3$, with parallel and commuting Ricci tensor, that is $\nabla S=0$ and $S{\phi}={\phi}S$ for the structure tensor $\phi$.

Moreover, this result was improved by Suh, \cite{S1} and \cite{S2}, who proved that the above non-existence property also can be hold for either parallel or commuting Ricci tensor.
\par
\vskip 6pt
In this paper, related to the parallel Ricci tensor,  we will study the corresponding condition using g-Tanaka-Webster connection. That is, we will consider real hypersurfaces for which $\hat{\nabla}_X^{(k)}S=0$ for any $X$ tangent to $M$. We obtain the following

\begin{theorem}
There do not exist connected orientable Hopf real hypersurfaces in $G_2(C^{m+2})$, $m \geq 3$, whose Ricci tensor is parallel with respect to the g-Tanaka-Webster connection.
\end{theorem}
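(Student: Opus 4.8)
The plan is to convert $\hat\nabla^{(k)}S=0$ into pointwise identities strong enough to force $M$ into the Berndt--Suh classification, and then to eliminate each model. First I would make the hypothesis explicit: since $S$ is a symmetric $(1,1)$--tensor, $(\hat\nabla^{(k)}_X S)Y=\hat\nabla^{(k)}_X(SY)-S(\hat\nabla^{(k)}_X Y)$, so substituting \eqref{1.1} and equating to zero gives, for all tangent $X,Y$,
\begin{equation}\label{e:master}
(\nabla_X S)Y=-g(\phi AX,SY)\xi+\eta(SY)\phi AX+g(\phi AX,Y)S\xi-\eta(Y)S\phi AX+k\,\eta(X)(\phi S-S\phi)Y.
\end{equation}
Two specializations drive the argument. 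Putting $X=\xi$ and using the Hopf relation $\phi A\xi=\alpha\phi\xi=0$ yields
\begin{equation}\label{e:xiS}
(\nabla_\xi S)Y=k\,(\phi S-S\phi)Y\qquad\text{for every }Y,
\end{equation}
while putting $Y=\xi$, using $\nabla_X\xi=\phi AX$, $\phi\xi=0$, $S\phi\xi=0$, $g(\phi AX,\xi)=0$ and cancelling $-S\phi AX$ from both sides gives
\begin{equation}\label{e:DXSxi}
\nabla_X(S\xi)=-g(\phi AX,S\xi)\,\xi+\eta(S\xi)\,\phi AX+k\,\eta(X)\,\phi S\xi .
\end{equation}

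Next I would substitute the explicit Ricci operator of a real hypersurface in $G_2(\mathbb{C}^{m+2})$, the algebraic identities $\phi_\nu\xi=\phi\xi_\nu$, $\phi\phi_\nu X=\phi_\nu\phi X+\eta_\nu(X)\xi-\eta(X)\xi_\nu$, $\eta_\nu(\phi X)=\eta(\phi_\nu X)$, and the derivative formula $\nabla_X\xi_\nu=q_{\nu+2}(X)\xi_{\nu+1}-q_{\nu+1}(X)\xi_{\nu+2}+\phi_\nu AX$. For a Hopf hypersurface one obtains $S\xi=\beta\xi-2\sum_{\nu=1}^3\eta(\xi_\nu)\xi_\nu$ with $\beta=4m+4+h\alpha-\alpha^2-\sum_\nu\eta(\xi_\nu)^2$ and $h=\Tr A$. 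Differentiating this identity along an arbitrary $X$ and comparing, separately along $\xi$, along $\mathfrak D^\perp$ and along $\mathfrak D$, with the right-hand side of \eqref{e:DXSxi} --- while also invoking the Berndt--Suh structure equations for Hopf hypersurfaces (notably the formula for $A\phi A$) --- produces a closed system of relations among the derivatives $Xh$, $X\eta(\xi_\nu)$, the connection forms $q_\nu(X)$, and the vectors $A\xi_\nu$ and $A\phi AX$. The target is to conclude that $[\xi]$ and $\mathfrak D^\perp$ are $A$--invariant, i.e. $A\xi=\alpha\xi$ together with $A\xi_\nu\in\mathfrak D^\perp$; then the hypothesis of the Berndt--Suh classification is met and $M$ is an open part of a model hypersurface of type $(A)$ or of type $(B)$.

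Finally I would dispose of the two models. For type $(A)$, $\xi\in\mathfrak D^\perp$ and $A$ is diagonal, with the four explicit constant principal curvatures distributed over $\mathbb{R}\xi$, $\mathfrak D^\perp\ominus\mathbb{R}\xi$, and the two $A$--eigenspaces making up $\mathfrak D$; for type $(B)$, $\xi\in\mathfrak D$ and $A$ has its own four constant principal curvatures. In either case $S$ becomes an explicit operator with constant eigenvalues, so $\nabla_\xi S$ and $\phi S-S\phi$ can be written out from the known covariant derivatives of the eigendistributions, and \eqref{e:xiS} --- supplemented, if needed, by \eqref{e:master} evaluated on suitable eigenvectors of $A$ in $\mathfrak D$ and on $\xi_1,\xi_2,\xi_3$ --- collapses to a numerical relation among the principal curvatures (equivalently, among the trigonometric functions of the tube radius) admitting no solution for any admissible radius and any $k\neq0$. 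This contradiction proves the theorem.

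The step I expect to be the main obstacle is the structural reduction of the third paragraph: when $\xi$ lies in neither $\mathfrak D$ nor $\mathfrak D^\perp$ one must keep all the $\phi_\nu$-- and $\xi_\nu$--terms and manipulate the quaternionic K\"ahler identities with care, and one must make sure the nonzero parameter $k$ is genuinely exploited --- otherwise the argument would only reprove the already-known case $\nabla S=0$. A secondary difficulty is the bookkeeping needed to push the model data through \eqref{e:master} on all the relevant eigenvectors.
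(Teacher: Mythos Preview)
Your overall architecture --- master identity, structural reduction to the Berndt--Suh list, then elimination of types $(A)$ and $(B)$ --- is exactly the paper's. The differences lie in how the middle step is executed and in a few details.

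For the reduction the paper does not try to prove $A$-invariance of $\mathfrak D^\perp$ directly. It first shows that either $\xi\in\mathfrak D$ or $\xi\in\mathfrak D^\perp$: write $\xi=\eta(X_0)X_0+\eta(\xi_1)\xi_1$ with $X_0\in\mathfrak D$ unit, put $X=\xi$, $Y=\phi X_0$ in the master identity, and take the $\xi$-component; after the standard simplifications this collapses to $4(\alpha-k)\,\eta(\xi_1)^2\eta(X_0)=0$. In the mixed case one gets $\alpha=k$, hence $\alpha$ constant, and then the Berndt--Suh gradient formula $Y\alpha=(\xi\alpha)\eta(Y)-4\sum_\nu\eta_\nu(\xi)\eta_\nu(\phi Y)$ forces $\phi_1\xi=0$, a contradiction. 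This single line is precisely where the nonzero parameter $k$ is exploited, answering your worry; $k$ plays no role in the later model elimination. Once $\xi\in\mathfrak D$, the paper invokes the Lee--Suh theorem to land directly on type $(B)$, bypassing any $A$-invariance argument for $\mathfrak D^\perp$; only when $\xi=\xi_1\in\mathfrak D^\perp$ is that invariance actually proved, via your $Y=\xi$ specialization with $X\in\mathfrak D$, paired against $\xi_2$ and $\xi_3$.

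For the models the paper again prefers the $Y=\xi$ specialization to your $X=\xi$ one: in type $(A)$, $X=\xi_2$, $Y=\xi$ yields $\beta\xi_3=0$; in type $(B)$, $X=\xi_1$, $Y=\xi$ yields $\alpha h=0$ (hence $h=0$), and then $X=\xi_1$, $Y=\xi_2$ yields $6\beta\xi_3=0$. Two small corrections to your sketch: in your expression for $S\xi$ the coefficient of $\xi$ is $4m+4+h\alpha-\alpha^2$, without the extra $-\sum_\nu\eta(\xi_\nu)^2$; and type $(B)$ has five distinct constant principal curvatures, not four.
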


\vskip 6pt
\par

\section{Preliminaries}\label{section 2}
\setcounter{equation}{0}
\renewcommand{\theequation}{2.\arabic{equation}}
\vspace{0.13in}

For the study of the Riemannian geometry of  $G_2(\mathbb{C}^{m+2})$ see \cite{B}. All the notations we will use from now on are the ones in \cite{BS1} and \cite{BS2}. We will suppose that the metric $g$ of $G_2(\mathbb{C}^{m+2})$ is normalized for the maximal sectional curvature of the manifold to be eight. Then the Riemannian curvature tensor $\bar{R}$ of $G_2(\mathbb{C}^{m+2})$ is locally given by

\begin{equation} \label{2.1}
\begin{split}
\bar{R}(X,Y)Z  =& g(Y,Z)X - g(X,Z)Y  +  g(JY,Z)JX - g(JX,Z)JY- 2g(JX,Y)JZ  \\                                                                                                &+  \sum_{\nu=1}^3\{g(J_{\nu} Y,Z)J_{\nu} X - g(J_{\nu} X,Z)J_{\nu}Y - 2g(J_{\nu} X,Y)J_{\nu} Z)\} \\
&+  \sum_{\nu=1}^3\{g(J_{\nu} JY,Z)J_{\nu} JX - g(J_{\nu}JX,Z)J_{\nu}JY\} ,
\end{split}
\end{equation}

\noindent where $J_1,J_2,J_3$ is any canonical local basis of ${\mathfrak J}$.
\par
\vskip 6pt

Let $M$ be a real hypersurface of $G_2(\mathbb{C}^{m+2})$, that is, a submanifold of $G_2(\mathbb{C}^{m+2})$ with real codimension one. The induced Riemannian metric on $M$ will also be denoted by $g$, and $\nabla$ denotes the Riemannian connection of $(M,g)$. Let $N$ be a local unit normal field of $M$ and $A$ the shape operator of $M$ with respect to $N$. The K\"{a}hler structure $J$ of $G_2(\mathbb{C}^{m+2})$ induces on $M$ an almost contact metric structure $(\phi,\xi,\eta,g)$. Furthermore, let $J_1,J_2,J_3$ be a canonical local basis of ${\mathfrak J}$. Then each $J_\nu$ induces an almost contact metric structure $(\phi_\nu,\xi_\nu,\eta_\nu,g)$ on $M$.
\par
\vskip 6pt
Since ${\mathfrak J}$ is parallel with respect to the Riemannian connection $\bar{\nabla}$ of $(G_2(\mathbb{C}^{m+2}),g)$, for any canonical local basis ${J_1,J_2,J_3}$ of ${\mathfrak J}$ there exist three local 1-forms $q_1,q_2,q_3$ such that

\begin{equation} \label{2.2}
\bar{\nabla}_XJ_{\nu}=q_{\nu +2}(X)J_{\nu +1}-q_{\nu + 1}(X)J_{\nu +2}
\end{equation}

\noindent for any $X$ tangent to $G_2(\mathbb{C}^{m+2})$, where subindices are taken modulo $3$.
\par
\vskip 6pt
From the expression of the curvature tensor of $G_2(\mathbb{C}^{m+2})$ the Gauss equation is given by

\begin{equation} \label{2.3}
\begin{split}
R(X,Y)Z =& g(Y,Z)X-g(X,Z)Y     \\
&+g(\phi Y,Z)\phi X-g(\phi X,Z)\phi Y-2g(\phi X,Y)\phi Z      \\
&+\sum_{\nu =1}^3 \{g(\phi_{\nu}Y,Z)\phi_{\nu}X-g(\phi_{\nu}X,Z)\phi_{\nu}Y-2g(\phi_{\nu}X,Y)-2g(\phi_{\nu}X,Y)\phi_{\nu}Z \}   \\
&+\sum_{\nu =1}^3 \{ g(\phi_{\nu}\phi Y,Z)\phi_{\nu}\phi X-g(\phi_{\nu}\phi X,Z)\phi_{\nu}\phi Y \}    \\
&-\sum_{\nu =1}^3 \{ \eta(Y)\eta_{\nu}(Z)\phi_{\nu}\phi X-\eta(X)\eta_{\nu}(Z)\phi_{\nu}\phi Y \}  \\
&-\sum_{\nu =1}^3 \{ \eta(X)g(\phi_{\nu}\phi Y,Z)-\eta(Y)g(\phi_{\nu}\phi X,Z) \} \xi_{\nu}    \\
&+g(AY,Z)ZX-g(AX,Z)AY
\end{split}
\end{equation}

\noindent for any $X,Y,Z$ tangent to $M$.

From (\ref{2.3}) the Ricci tensor of $M$ is given by

\begin{equation} \label{2.4}
\begin{split}
SX =& (4m+7)X-3\eta(X)\xi -\sum_{\nu =1}^3 \eta_{\nu}(X)\xi_{\nu}  \\
&+\sum_{\nu =1}^3 \{ \eta_{\nu}(\xi)\phi_{\nu}\phi X-\eta(\phi_{\nu}X)\phi_{\nu}\xi-\eta(X)\eta_{\nu}(\xi)\xi_{\nu} \}    \\
&+hAX-A^2X
\end{split}
\end{equation}

\noindent  for any $X$ tangent to $M$, where $h$ denotes $trace(A)$.
\par
\vskip 6pt

From (\ref{2.4}) we can compute, see \cite{PS},

\begin{equation} \label{2.5}
\begin{split}
(\nabla_XS)Y =& -3g(\phi AX,Y)\xi -3\eta(Y)\phi AX     \\
&-3\sum_{\nu =1}^3 \{ q_{\nu +2}(X)\eta_{\nu +1}(Y)-q_{\nu +1}(X)\eta_{\nu +2}(Y)+g(\phi_{\nu}AX,Y) \} \xi_{\nu}       \\
&-3\sum _{\nu =1}^3 \eta_{\nu}(Y) \{  q_{\nu +2}(X)\xi_{\nu +1}-q_{\nu +1}(X)\xi_{\nu +2}+\phi_{\nu}AX \}  \\
&+\sum_{\nu =1}^3 \{ X(\eta_{\nu}(\xi))\phi_{\nu}\phi Y+\eta_{\nu}(\xi) \{ -q_{\nu +1}(X)\phi_{\nu +2}\phi Y   \\
&+q_{\nu +2}(X)\phi_{\nu +1}\phi Y+\eta_{\nu}(\phi Y)AX-g(AX,\phi Y)\xi_{\nu} \}    \\
&+\eta_{\nu}(\xi) \{ \eta(Y)\phi_{\nu}AX-g(AX,Y)\phi_{\nu}\xi \} -g(\phi AX,\phi_{\nu}Y)\phi_{\nu}\xi   \\
&+\{ q_{\nu +1}(X)\eta(\phi_{\nu +2}Y)-q_{\nu +2}(X)\eta(\phi_{\nu+1}Y)-\eta_{\nu}(Y)\eta(AX)  \\
&+\eta(\xi_{\nu})g(AY,X)\phi_{\nu}\xi -\eta(\phi_{\nu}Y) \{q_{\nu +2}(X)\phi_{\nu +1}\xi   \\
&-q_{\nu +1}(X)\phi_{\nu +2}\xi +\phi_{\nu}\phi AX-\eta(AX)\xi_{\nu}+\eta(\xi_{\nu})AX \}   \\
&-g(\phi AY,X)\eta_{\nu}(\xi)\xi_{\nu}-\eta(Y)X(\eta_{\nu}(\xi))\xi_{\nu}-\eta(Y)\eta_{\nu}(\xi)\nabla_X\xi_{\nu} \}  \\
&+X(h)AY+h(\nabla_XA)Y-(\nabla_XA^2)Y
\end{split}
\end{equation}

\noindent for any $X,Y$ tangent to $M$, where the subindices are taken modulo 3.
\par
\vskip 6pt

A real hypersurface of type (A) has three (if $r=\frac{\pi} { 2\sqrt {8}}$) or four (otherwise) distinct principal curvatures $\alpha =\sqrt{8}\cot(\sqrt{8} r)$, $\beta =\sqrt{2} \cot(\sqrt{2} r)$, $\lambda =-\sqrt{2} \tan(\sqrt{2} r)$, $\mu =0$, for some radius $r \in (0,\frac{\pi} { \sqrt {8}})$, with corresponding multiplicities $m(\alpha)=1$, $m(\beta)=2$, $m(\lambda)=m(\mu)=2m-2$. The corresponding eigenspaces can be seen in \cite{BS1}.
\par
\vskip 6pt
A real hypersurface of type (B) has five distinct principal curvatures $\alpha =-2\tan(2r)$, $\beta =2\cot(2r)$, $\gamma =0$, $\lambda =\cot(r)$, $\mu =-\tan(r)$, for some $r \in (0,\frac{\pi} { 4})$, with corresponding multiplicities $m(\alpha)=1$, $m(\beta)=3=m(\gamma)$, $m(\lambda)=4m-4=m(\mu)$. For the corresponding eigenspaces see \cite{BS1}.

\vskip 8pt

\section{Proof of the Theorem}\label{section 3}
\setcounter{equation}{0}
\renewcommand{\theequation}{3.\arabic{equation}}
\vspace{0.13in}

If the Ricci tensor of $M$ is g-Tanaka-Webster parallel we get

\begin{equation} \label{3.1}
\begin{split}
0&=(\hat{\nabla}_X^{(k)}S)Y=\hat{\nabla}_X^{(k)}SY-S\hat{\nabla}_X^{(k)}Y    \\
&=\nabla_XSY+g(\phi AX,SY)\xi -\eta(SY)\phi AX-k\eta(X)\phi SY   \\
&\quad -S\nabla_XY-g(\phi AX,Y)S\xi +\eta(Y)s\phi AX+k\eta(X)S\phi Y
\end{split}
\end{equation}

\noindent for any $X,Y$ tangent to $M$. This yields

\begin{equation} \label {3.2}
\begin{split}
(\nabla_XS)Y =& -g(\phi AX,SY)\xi +\eta(SY)\phi AX+k\eta(X)\phi SY   \\
&+g(\phi AX,Y)S\xi -\eta(Y)S\phi AX-k\eta(X)S\phi Y.
\end{split}
\end{equation}

Thus from (\ref{2.5}) we obtain

\begin{equation} \label{3.3}
\begin{split}
&-3g(\phi AX,Y)\xi -3\eta(Y)\phi AX   \\
&-3\sum_{\nu =1}^3 \{ q_{\nu +2}(X)\eta_{\nu +1}(Y)-q_{\nu +1}(X)\eta_{\nu +2}(Y)+g(\phi_{\nu}AX,Y) \} \xi_{\nu}   \\
&-3\sum_{\nu =1}^3\eta_{\nu}(Y) \{ q_{\nu +2}(X)\xi_{\nu +1}-q_{\nu +1}(X)\xi_{\nu +2}+\phi_{\nu}AX \}    \\
&+\sum_{\nu =1}^3 \{ X(\eta_{\nu}(\xi)\phi_{\nu}\phi Y+\eta_{\nu}(\xi) \{ -q_{\nu +1}(X)\phi_{\nu +2}\phi Y  \\
&+q_{\nu +2}(X)\phi_{\nu +1}\phi Y+\eta_{\nu}(\phi Y)AX-g(AX,\phi Y)\xi_{\nu} \}   \\
&+\eta_{\nu}(\xi) \{\eta(Y)\phi_{\nu}AX-g(AX,Y)\phi_{\nu}\xi \} -g(\phi AX,\phi_{\nu}Y)\phi_{\nu}\xi    \\
&+ \{ q_{\nu +1}(X)\eta(\phi_{\nu +2}Y)-q_{\nu +2}(X)\eta(\phi_{\nu +1}Y)-\eta_{\nu}(Y)\eta(AX)    \\
&+\eta(\xi_{\nu}g(AX,Y) \} \phi_{\nu}\xi -\eta(\phi_{\nu}Y) \{ q_{\nu+2}(X)\phi_{\nu +1}\xi     \\
&-q_{\nu +1}(X)\phi_{\nu +2}\xi +\phi_{\nu}\phi AX-\eta(AX)\xi_{\nu}+\eta(\xi_{\nu})AX \}    \\
&-g(\phi AX,Y)\eta_{\nu}(\xi)\xi_{\nu}-\eta(Y)X(\eta_{\nu}(\xi))\xi_{\nu}-\eta(Y)\eta_{\nu}(\xi)\nabla_X\xi_{\nu} \}  \\
&+X(h)AY+h(\nabla_XA)Y-(\nabla_XA^2)Y    \\
&=-g(\phi AX,SY)\xi +\eta(SY)\phi AX+k\eta(X)\phi SY \\
&+g(\phi AX,Y)S\xi -€ta(Y)S\phi AX-k\eta(X)S\phi Y
\end{split}
\end{equation}

\noindent for any $X,Y$ tangent to $M$.
\par
\vskip 6pt
\begin{lemma} \label{lemma}
Let $M$ be a Hopf real hypersurface in $G_2(\mathbb{C}^{m+2})$, $m \geq 3$, such that its Ricci tensor is g-Tanaka-Webster parallel. Then either $\xi \in {\D}$ or $\xi \in {\D}^{\perp}$.
\end{lemma}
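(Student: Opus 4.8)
The plan is to exploit the general identity (\ref{3.3}) specialized to a Hopf hypersurface by a clever choice of $X$ and $Y$ tied to the distribution $\D^{\perp}$. Since $M$ is Hopf, we have $A\xi = \alpha\xi$, and the standard formulas (from \cite{BS1}, \cite{BS2}) give $A\phi_\nu\xi + \phi_\nu A\xi = $ terms controlled by $\eta$-functions, as well as $\nabla_X\xi = \phi A X$ and expressions for $\nabla_X\xi_\nu$ in terms of the $q$-forms and $\phi_\nu A X$. The first step is to write $\xi = \eta(\xi_0)\xi_0 + $ (component in $\D$), or rather decompose using the vector $\xi_0 \in \D^{\perp}$ defined (as usual in this circle of papers) so that $\xi = \eta(\xi_1)\xi_1 + \eta(\xi_2)\xi_2 + \eta(\xi_3)\xi_3 + \text{(}\D\text{-part)}$; after a suitable rotation of the canonical basis $\{J_1,J_2,J_3\}$ one may assume $\xi = \cos\theta\, \xi_1 + \sin\theta\, W$ with $W$ a unit vector in $\D$. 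The goal is to show $\sin\theta\cos\theta = 0$.

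Next I would substitute $X = Y = \xi$ into (\ref{3.3}). On the left-hand side of (\ref{3.3}) many terms involving $g(\phi A\xi, \cdot) = g(\alpha\phi\xi,\cdot) = 0$ drop out, and the Hopf condition collapses the $h(\nabla_\xi A)Y - (\nabla_\xi A^2)Y$ terms using the known fact (e.g. from \cite{BS1}) that $(\nabla_\xi A)\xi = 0$ on a Hopf hypersurface, or at least reduces them to the $\D^{\perp}$-directions. On the right-hand side, with $X = Y = \xi$, we get $\eta(S\xi)\phi A\xi + k\,\phi S\xi + g(\phi A\xi,\xi)S\xi - S\phi A\xi - k S\phi\xi$; since $A\xi = \alpha\xi$ this is $\alpha\eta(S\xi)\cdot 0 + k\phi S\xi + 0 - S(\alpha\phi\xi) - kS(\phi\xi) = k\phi S\xi - \alpha S\phi\xi - kS\phi\xi$, and using $\phi\xi = 0$ the whole right-hand side vanishes identically. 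Thus the $X=Y=\xi$ substitution forces a constraint purely on the left-hand side of (\ref{3.3}): the sum over $\nu$ of the $\eta_\nu(\xi)$-weighted terms, together with the $q$-form contributions, must vanish. Working out this vanishing, using $\nabla_\xi\xi_\nu = q_{\nu+2}(\xi)\xi_{\nu+1} - q_{\nu+1}(\xi)\xi_{\nu+2} + \phi_\nu A\xi = q_{\nu+2}(\xi)\xi_{\nu+1} - q_{\nu+1}(\xi)\xi_{\nu+2}$ (since $\phi_\nu A\xi = \alpha\phi_\nu\xi$ and one collects these), should yield an equation of the form $\bigl(\text{something nonzero}\bigr)\cdot\eta_\nu(\xi)\cdot(\D\text{-component of }\xi) = 0$ after projecting onto a well-chosen subspace.

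More concretely, the decisive move is to take the inner product of (\ref{3.3}) (with $X = Y = \xi$) against a vector in $\D$, or alternatively to substitute $X = \xi$, $Y = W$ where $W \in \D$ is the normalized $\D$-component of $\xi$. The $\phi_\nu\xi$-directed and $\xi_\nu$-directed pieces must separately vanish. Collecting the $\xi_\nu$-components and using $\eta_\nu(\xi) = \cos\theta\,\delta_{\nu 1}$ in the rotated basis, one extracts a scalar identity whose only solution is $\cos\theta\sin\theta = 0$, i.e. either $\theta = 0$ (so $\xi = \xi_1 \in \D^{\perp}$) or $\sin\theta = 1$ (so $\xi \in \D$). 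The main obstacle I anticipate is bookkeeping: the right-hand side of (\ref{3.3}) is not quite as trivial for the mixed substitution $X=\xi, Y=W$ as it is for $X=Y=\xi$, because $S\phi W$ and $S\phi A\xi$ no longer vanish, so one must carry the Ricci formula (\ref{2.4}) through; keeping track of which terms are genuinely of order $\cos\theta\sin\theta$ versus which cancel by the modulo-$3$ antisymmetry of the $q$-forms is where the delicacy lies. A secondary subtlety is justifying that the coefficient one divides by is nonzero for all $m \geq 3$, which should follow from the multiplicity count and the principal-curvature data recorded at the end of Section~\ref{section 2}.
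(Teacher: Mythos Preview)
Your plan has two concrete problems, one minor and one essential.

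First, the minor one: for $X=Y=\xi$ the right-hand side of (\ref{3.2}) does \emph{not} vanish. With $A\xi=\alpha\xi$ every term carrying $\phi A\xi$ dies, but the two $k$-terms leave $k\phi S\xi - kS\phi\xi = k\phi S\xi$, and from (\ref{2.4}) one has $S\xi=(4m+4+h\alpha-\alpha^{2})\xi-4\eta(\xi_{1})\xi_{1}$, so $k\phi S\xi=-4k\,\eta(\xi_{1})\phi\xi_{1}$, which is nonzero exactly in the mixed case you are trying to exclude. So the substitution $X=Y=\xi$ is not the free lunch you describe.

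The essential gap is in the endgame. You expect that after a good substitution you will be able to ``divide by a nonzero coefficient'' depending on $m$ and conclude $\cos\theta\sin\theta=0$ directly. That is not how the argument closes. In the paper the substitution is $X=\xi$, $Y=\phi X_{0}$ (note: $\phi X_{0}$, not $X_{0}$ itself), followed by the scalar product with $\xi$. After the bookkeeping this yields
\[
4(\alpha-k)\,\eta(\xi_{1})^{2}\eta(X_{0})=0,
\]
so the obstructing coefficient is $\alpha-k$, and there is no a priori reason for it to be nonzero. The decisive second step, which your outline does not contain, is: if $\alpha=k$ then $\alpha$ is a \emph{constant}, and one invokes the Berndt--Suh gradient identity (from \cite{BS1})
\[
Y(\alpha)=\xi(\alpha)\eta(Y)-4\sum_{\nu=1}^{3}\eta_{\nu}(\xi)\,\eta_{\nu}(\phi Y)
\]
for all $Y$. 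Constancy of $\alpha$ forces $\eta_{1}(\xi)\phi_{1}\xi=0$, hence $\phi_{1}\xi=0$, which implies $|\eta_{1}(\xi)|=1$ and contradicts $\eta(X_{0})\neq 0$. Without this second idea your scheme stalls at exactly the borderline case $\alpha=k$, and the ``coefficient nonzero for all $m\geq 3$'' heuristic you appeal to does not apply, because the coefficient depends on the principal curvature $\alpha$ and the connection parameter $k$, not on $m$.
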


\begin{proof} We can write $\xi =\eta(X_0)X_0+\eta(\xi_1)\xi_1$, where $X_0$ is a unit vector field in ${\D}$. Suppose that $A\xi =\alpha\xi$ and that $\eta(X_0)\eta(\xi_1) \neq 0$. As $\xi(\eta_1(\xi))=g(\xi ,\nabla_{\xi}\xi_1)$, if we take $X=\xi$ and $Y=\phi X_0$ in (\ref{3.3}) we get, bearing in mind that $\eta_{\nu}(\phi X_0)=0$, $\nu =1,2,3$.

\begin{equation} \label{3.4}
\begin{split}
&3\alpha\eta(\xi_1)\eta(X_0)\xi_1+\eta(\nabla_{\xi}\xi_1)\eta(X_0)\phi_1\xi -\eta(\nabla_{\xi}\xi_1)\phi_1X_0   \\
&+\eta_1(\xi) \{ -q_2(\xi)\phi_3\phi^2X_0+q_3(\xi)\phi_2\phi^2X_0+\eta(X_0)\eta(\xi_1)\alpha\xi \}   \\
&+ \{ q_2(\xi)\eta(\phi_3\phi X_0)-q_3(\xi)\eta(\phi_2\phi X_0) \} \phi_1\xi   \\
&+ \{ q_3(\xi)\eta(\phi_1\phi X_0)-q_1(\xi)\eta(\phi_3\phi X_0) \} \phi_2\xi    \\
&+ \{ q_1(\xi)\eta(\phi_2\phi X_0)-q_2(\xi)\eta(\phi_1\phi X_0) \} \phi_3\xi   \\
&-\eta(X_0)\eta(\xi_1) \{ q_3(\xi)\phi_2\xi -q_2(\xi)\phi_3\xi -\alpha\xi_1+\alpha\eta(\xi_1)\xi \}   \\
&+\xi(\alpha)A\phi X_0+h(\nabla_{\xi}A)\phi X_0-(\nabla_{\xi}A^2)\phi X_0      \\
&=k\phi S\phi X_0+kSX_0-k\eta(X_0)S\xi.
\end{split}
\end{equation}

Now we have $S\xi =(4m+4+h\alpha -\alpha^2)\xi-4\eta(\xi_1)\xi_1$, $\eta(\phi_1\phi X_0)=-g(\phi X_0,\phi\xi_1)=\eta(X_0)\eta(\xi_1)$ and $\eta(\phi_{\nu}\phi X_0)=0$, $\nu =2,3$. Introducing these equalities in (\ref{3.4}) and taking its scalar product with $\xi$ we obtain

\begin{equation} \label{3.5}
4(\alpha -k)\eta^2(\xi_1)\eta(X_0)=g(\phi X_0,(\nabla_{\xi}A^2)\xi)-hg(\phi X_0,(\nabla_{\xi}A)\xi).
\end{equation}

As $g(\phi X_0,(\nabla_{\xi}A)\xi)=g(\phi X_0,\nabla_{\xi}\alpha\xi)=\alpha g(\phi X_0,\phi A\xi)=0$ and the same is true for the other term in the right of the equality in (\ref{3.5}) we arrive to

\begin{equation} \label{3.6}
4(\alpha -k)\eta^2(\xi_1)\eta(X_0)=0.
\end{equation}

As we suppose $\eta(\xi_1)\eta(X_0) \neq 0$, we get $\alpha =k$. Thus $\alpha$ is constant. From Berndt and Suh, \cite{BS1}, we know that $M$ being Hopf, for any $Y \in TM$, $Y(\alpha)=\xi(\alpha)\eta(Y)-4\sum_{\nu =1}^3 \eta_{\nu}(\xi)\eta_{\nu}(\phi Y)$. This yields $\phi_1\xi =0$, giving a contradiction.

Therefore $\eta(X_0)\eta(\xi_1)=0$ and we obtain the result.
\end{proof}

By Lee and Suh, \cite{LS}, if $\xi \in {\D}$, $M$ is locally a type (B) real hypersurface.
\par
\vskip 6pt
Consider the case $\xi \in {\D}^{\perp}$. Then we have

\begin{lemma} \label{lemma}
Let $M$ be a Hopf real hypersurface of $G_2(\mathbb{C}^{m+2})$, $m \geq 3$. Suppose that the Ricci tensor of $M$ is g-Tanaka-Webster parallel and $\xi \in {\D}^{\perp}$. Then $g(A{\D},{\D}^{\perp})=0$.
\end{lemma}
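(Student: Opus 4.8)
The plan is to exploit the fundamental identity (\ref{3.3}) by feeding it vectors adapted to the situation $\xi\in{\D}^{\perp}$, say $\xi=\xi_1$, so that $\eta=\eta_1$, $\eta_2(\xi)=\eta_3(\xi)=0$ and $A\xi=\alpha\xi$. Under these hypotheses the Ricci tensor (\ref{2.4}) simplifies considerably: the terms involving $\eta_\nu(\xi)$ collapse, and one computes $S\xi=(4m+4+h\alpha-\alpha^2)\xi$, so $\xi$ is a Ricci eigenvector. Likewise one knows (from Berndt--Suh) that for a Hopf hypersurface with $\xi\in{\D}^{\perp}$ the function $\alpha$ is constant and the structure is rather rigid; in particular $\phi A=A\phi$ on ${\D}^{\perp}$ in a controlled way and $\nabla_X\xi_\nu$ has a known form. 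I would first record these simplifications carefully, then substitute $X=\xi$ into (\ref{3.2})/(\ref{3.3}).

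With $X=\xi$, the left side of (\ref{3.2}) becomes $(\nabla_\xi S)Y$, which is computable from (\ref{2.5}) using $A\xi=\alpha\xi$, $\phi A\xi=0$, and the constancy of $\alpha$; many terms drop because $\phi A\xi=0$. On the right side of (\ref{3.2}), with $X=\xi$ one gets $\eta(SY)\phi A\xi - k\,\phi SY + \dots$; since $\phi A\xi=0$ the first term vanishes and we are left with an expression of the form $k\phi SY - \eta(Y)S\phi A\xi - kS\phi Y + (\text{terms with }g(\phi A\xi,\cdot)=0)$, i.e. essentially $k(\phi S-S\phi)Y$. Comparing with the simplified $(\nabla_\xi S)Y$ should yield a relation that, after projecting onto ${\D}^{\perp}$ and ${\D}$ appropriately, forces the ${\D}$-to-${\D}^{\perp}$ block of $A$ to vanish. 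Concretely, I expect to test the resulting identity against $Y\in{\D}$ and take components in ${\D}^{\perp}$ (or vice versa): the curvature-type terms in (\ref{2.5}) contribute only ${\D}^{\perp}$-valued pieces when $Y\in{\D}$, while $\phi S - S\phi$ applied to such $Y$ will involve $hA - A^2$ acting between ${\D}$ and ${\D}^{\perp}$, and matching the two should give $g(A{\D},{\D}^{\perp})=0$.

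A complementary and probably cleaner route is to substitute $Y=\xi_2$ (or $\xi_3$) and a general $X$, or to substitute $X=\xi_2$, $Y=\xi_3$, and read off how $A$ couples ${\D}$ and ${\D}^{\perp}$; the point is that the $q_\nu$ 1-forms and the $\phi_\nu\xi$ vectors (which span $\phi{\D}^{\perp}\subset{\D}$) appear linearly, so a judicious choice isolates $g(AX_0,\xi_\nu)$ for $X_0\in{\D}$. I would compute $SY$ for $Y=\phi_2\xi$, $\phi_3\xi$ (these lie in ${\D}$) using (\ref{2.4}) and then plug into (\ref{3.2}) with $X$ ranging over ${\D}$, extracting the ${\D}^{\perp}$-components. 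In all cases the needed input beyond (\ref{3.3}) is the standard Hopf machinery in ${\D}^{\perp}$: $\alpha$ constant, $\nabla_X\xi=\phi AX$, and the Berndt--Suh formulas for $\nabla_X\xi_\nu$.

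The main obstacle I anticipate is bookkeeping rather than conceptual: equation (\ref{3.3}) is long, and one must track which of the many terms survive after imposing $\xi\in{\D}^{\perp}$, $A\xi=\alpha\xi$, and the vanishing $\phi A\xi=0$, while correctly handling the cyclic $\nu$-sums and the $q_\nu$ terms (which should ultimately cancel or be absorbed since the final statement is $q_\nu$-free). The delicate point is to choose the test vectors so that the ${\D}$-${\D}^{\perp}$ block of $A$ appears without being entangled with $(\nabla_X A)Y$ or $(\nabla_X A^2)Y$; I would use that for a Hopf hypersurface $g((\nabla_X A)Y,\xi)$ is expressible via the Codazzi equation purely in curvature terms, so those derivative-of-$A$ contributions, when paired against $\xi$ or against ${\D}^{\perp}$-directions, reduce to algebraic expressions in $A$, $\alpha$, $h$, and the $\eta_\nu$'s — at which point the identity becomes an algebraic one forcing $g(A{\D},{\D}^{\perp})=0$.
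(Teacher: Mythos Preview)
Your plan never lands on the substitution that actually works, and your primary choice $X=\xi$ is precisely the wrong one. With $X=\xi$ one has $\phi A\xi=0$, so every occurrence of $\phi AX$, $\phi_\nu AX$, $g(\phi AX,\cdot)$ in (\ref{3.3}) collapses; but those are exactly the terms that carry the ${\D}$--${\D}^{\perp}$ block of $A$. What survives is $(\nabla_\xi S)Y=k(\phi S-S\phi)Y$, and now the shape-operator information sits inside $(\nabla_\xi A)Y$, $(\nabla_\xi A^2)Y$, which you cannot reduce to algebra in $A$ by Codazzi (your suggested remedy) because you would need to pair against $\xi_2$ or $\xi_3$, not $\xi$. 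Your secondary suggestions ($Y=\xi_2,\xi_3$, or $X=\xi_2$, $Y=\xi_3$) run into the same difficulty: the derivative-of-$A$ terms do not simplify.

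The paper does the opposite: it takes $X\in{\D}$ arbitrary and $Y=\xi$. The point is that with $Y=\xi$ and $A\xi=\alpha\xi$, $\alpha$ constant, one has
\[
(\nabla_X A)\xi=\alpha\phi AX-A\phi AX,\qquad (\nabla_X A^2)\xi=\alpha^2\phi AX-A^2\phi AX,
\]
so the dangerous terms $h(\nabla_X A)\xi-(\nabla_X A^2)\xi$ are already algebraic in $A$. Substituting $Y=\xi$, $X\in{\D}$ into (\ref{3.3}) with $\xi=\xi_1$ then yields a short identity (equation (\ref{3.7}) in the paper) in which $g(AX,\phi_2\xi_1)$ and $g(AX,\phi_3\xi_1)$ appear explicitly. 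Taking the inner product with $\xi_2$, the $hA\phi AX-A^2\phi AX$ pieces from the left cancel exactly against the $hA-A^2$ part of $S\phi AX$ on the right (using $S\xi_2=(4m+6)\xi_2+hA\xi_2-A^2\xi_2$), and what remains is a numerical identity of the form $5\,g(AX,\xi_3)=6\,g(AX,\xi_3)$, i.e.\ $g(A\xi_3,X)=0$. The pairing with $\xi_3$ gives $g(A\xi_2,X)=0$ in the same way. No Codazzi, no Berndt--Suh structural theorems on $\nabla_X\xi_\nu$, no $q_\nu$--bookkeeping are needed. (Incidentally, your formula for $S\xi$ is the generic one; once $\xi=\xi_1$ the extra $-4\eta(\xi_1)\xi_1$ term gives $S\xi=(4m+h\alpha-\alpha^2)\xi$.)
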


\begin{proof} As $\xi \in {\D}^{\perp}$ we can suppose that $\xi =\xi_1$. As $M$ is Hopf $g(A\xi_1,X)=0$ for any $X \in {\D}$. Thus we must prove that $g(A\xi_{\nu},X)=0$, $\nu =2,3$.

Take $X \in {\D}$, $Y=\xi$ in (\ref{3.3}). We obtain

\begin{equation} \label{3.7}
\begin{split}
&-3\phi AX+4g(AX,\phi_2\xi_1)\xi_2+4g(AX,\phi_3\xi_1)\xi_3+X(h)A\xi     \\
&+h(\nabla_XA)\xi -(\nabla_XA^2)\xi =\eta(S\xi)\phi AX-S\phi AX.
\end{split}
\end{equation}

From (\ref{2.4}) we have $S\xi =(4m+h\alpha -\alpha^2)\xi$ and $S\xi_2=(4m+6)\xi_2+hA\xi_2-A^2\xi_2$. If we take the scalar product of (\ref{3.7}) and $\xi_2$ and use these expressions we obtain $5g(AX,\xi_3)=6g(AX,\xi_3)$. That is

\begin{equation} \label{3.8}
g(A\xi_3,X)=0
\end{equation}

\noindent for any $X\in {\D}$. Similarly we obtain

\begin{equation} \label{3.9}
g(A\xi_2,X)=0
\end{equation}

\noindent for any $X \in {\D}$. From (\ref{3.8}) and (\ref{3.9}) the result follows.
\end{proof}

From Lemma 3.1 and Lemma 3.2 we know that $M$ is locally congruent to a real hypersurface either of type (A) or of type (B).

Suppose that $M$ is of type (A). Remember that $A\xi =\alpha\xi$, $A\xi_2=\beta\xi_2$, $A\xi_3=\beta\xi_3$, with $\alpha =\sqrt{8} cot(\sqrt{8} r)$ and $\beta =\sqrt{2} cot(\sqrt{2} r)$. Take $Y=\xi$, $X=\xi_2$ in (\ref{3.3}). We have

\begin{equation} \label{3.10}
\begin{split}
&4\beta\xi_3+h\nabla_{\xi_2}\alpha\xi -hA\phi A\xi_2-\nabla_{\xi_2}\alpha^2\xi +A^2\phi A\xi_2   \\
&=\beta \{ g(\xi_3,S\xi)\xi -\eta(S\xi)\xi_3+S\xi_3 \} .
\end{split}
\end{equation}
\par
\vskip 6pt
As $\alpha$ is constant, $S\xi =(4m+h\alpha -\alpha^2)\xi$ and $S\xi_3=(4m+6+h\beta -\beta^2)\xi_3$, from (\ref{3.10}) we arrive to $\beta\xi_3=0$, which is impossible. Thus type (A) real hypersurfaces do not have g-Tanaka-Webster parallel Ricci tensor.
\par
\vskip 6pt
In the case of a type (B) real hypersurface if we take $X=\xi_1$, $Y=\xi$ in (\ref{3.3}) and bear in mind that $S\xi =(4m+4+h\alpha -\alpha^2)\xi$ and $S\phi_1\xi =(4m+8)\phi_1\xi$, we obtain $\alpha h=0$, where $\alpha =-2\tan(2r)$. As $\alpha \neq 0$ we must have $h=0$.

Take then $X=\xi_1$, $Y=\xi_2$ in (\ref{3.3}) and bear in mind that $h=0$. With similar computations we obtain $6\beta\xi_3=0$, for $\beta=2\cot(2r)$. As this is impossible, type (B) real hypersurfaces do not have g-Tanaka-Webster parallel Ricci tensor and this finishes the proof of our Theorem.

\par
\vskip 12pt


\end{document}